\theoremstyle{definition}
\newtheorem{definition}{Definition}[section]
\newtheorem{lemma}[definition]{Lemma}
\begin{document}

\title{A Note on \enquote{Spaces of Topological Complexity One}}

\author{Ramandeep Singh Arora}

\address{Department of Mathematical Sciences\\ Indian Institute of Science Education and Research (IISER) Mohali\\ Sector 81\\ S.A.S. Nagar\\ P.O. Manauli\\ Punjab 140306\\ India.}

\email{ms14030@iisermohali.ac.in}

\keywords{Topological complexity}
\subjclass[2010]{55M30}

\begin{abstract}
   Here we give a reformulation of a key lemma in the paper \cite{grant}, \enquote{Spaces of Topological Complexity One}, which is necessary due to an oversight. 
\end{abstract}

\maketitle

\section{Introduction}
The topological complexity \text{TC}$(X)$ of motion planning, introduced by M. Farber in \cite{farber}, is a numerical homotopy invariant which measures the discontinuity of motion planning in the space $X$. 

This note emerged out while reviewing the research paper \cite{grant} as a part of my MS-thesis on topological complexity. Lemma $3.3$ in \cite{grant} consists of two parts in which the second is proved using the first. It can be seen that the statement of the first part is not valid for $n \geq 4$. Thus the proof given in \cite{grant} has a flaw. In addtion, the second part of the lemma plays a key role in proving the main result of \cite{grant}. However, it is important to note that statement of the second part is correct and thus the main result of the paper is still worthy of praise. In this note we are going to describe the problem in the proof of the lemma, give a direct proof of the second part and show that with a simple modification the first part can be corrected.

\section{Oversight in the Lemma}

Let $X$ be a topological space and $\Bbbk$ be a field. Let $H^*(X;\Bbbk)^{\otimes n}$ denote the $n$-fold tensor product $H^*(X;\Bbbk)$ over $\Bbbk$. Suppose $a \in H^*(X;\Bbbk)$. Let $a_i$ denote the element $1 \otimes \cdots \otimes 1 \otimes a \otimes 1 \otimes \cdots \otimes 1$ of $H^*(X;\Bbbk)^{\otimes n}$ where $a$ is at the $i^{th}$ position and $|a|$ denote the degree of the cohomology class $a$ in $H^*(X;\Bbbk)$.

\vspace{1mm}
Suppose $a,b \in H^*(X;\Bbbk)$. Due to the graded product on $H^*(X;\Bbbk)^{\otimes n}$, we have
\begin{equation*}
a_ib_j=
\begin{cases}
1 \otimes \cdots \otimes 1 \otimes a \otimes 1 \otimes \cdots \otimes 1 \otimes b \otimes 1 \otimes \cdots \otimes 1 & \text{if } i \leq j\\
(-1)^{|a||b|}(1 \otimes \cdots \otimes 1 \otimes b \otimes 1 \otimes \cdots \otimes 1 \otimes a \otimes 1 \otimes \cdots \otimes 1) & \text{if } i > j.
\end{cases}
\end{equation*}
where $a$ and $b$ are at the $i^{th}$ and $j^{th}$ position respectively. Moreover, we have 
\begin{equation*}
a_ib_j=(-1)^{|a||b|}b_ja_i.
\end{equation*}

\pagebreak

\begin{lemma} (Lemma $3.3$ in \cite{grant})
\label{incorrectlemma}
Suppose we have $a,b \in H^*(X;\Bbbk)$. With the above notation, for $n \geq 2$ we have 
\begin{equation}
\label{equation1}
(a_1-a_2)(a_1-a_3) \cdots (a_1 -a_n) \equiv (-1)^n(a \otimes 1-1 \otimes a) \otimes a \otimes \cdots \otimes a
\end{equation}
modulo terms in the ideal of $H^*(X;\Bbbk)^{\otimes n}$ generated by the elements $a^2 \otimes 1 \otimes \cdots \otimes 1$ and $a \otimes \cdots \otimes a \otimes 1$. Consequently, we have
\begin{equation*}
(b_1-b_2)(a_1-a_2)(a_1-a_3) \cdots (a_1-a_n) \equiv (-1)^{n+1}(b\otimes a + (-1)^{|a||b|}a \otimes b)\otimes a \otimes \cdots \otimes a
\end{equation*}
modulo terms in the ideal of $H^*(X;\Bbbk)^{\otimes n}$ generated by the elements $a^2 \otimes 1 \otimes \cdots \otimes 1$, $ba \otimes 1 \otimes \cdots \otimes 1$, and $1 \otimes ba \otimes 1 \otimes \cdots \otimes 1$.
\end{lemma}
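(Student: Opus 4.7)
My plan is to expand each product as a sum over subsets and then use the generators of the stated ideal to discard most of the resulting monomials. For the first identity I would write
\begin{equation*}
(a_1-a_2)(a_1-a_3)\cdots(a_1-a_n) = \sum_{S \subseteq \{2,\ldots,n\}} (-1)^{|S|} \, a_1^{n-1-|S|} \prod_{k \in S} a_k,
\end{equation*}
and then use $a_1^2 \equiv 0$ (the first generator) to throw away every term with $|S| \le n-3$. What remains is the top term $(-1)^{n-1}\,1\otimes a\otimes\cdots\otimes a$, together with $n-1$ ``second-to-top'' monomials indexed by the omitted position $j \in \{2,\ldots,n\}$, each of the form $(-1)^{n-2}\,a\otimes a \otimes \cdots \otimes 1 \otimes \cdots \otimes a$ with a $1$ in the $j$-th slot.

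For the second identity the same expansion, multiplied by $(b_1-b_2)$, becomes drastically simpler because the extra generators force $b_1 a_1 \equiv 0$ and $b_2 a_2 \equiv 0$. Together with $a_1^2 \equiv 0$, the factor $b_1$ now forces $|S|=n-1$, contributing only $(-1)^{n-1}\,b \otimes a \otimes \cdots \otimes a$; and the factor $-b_2$ kills every $|S|=n-2$ term whose subset contains $2$, leaving only the one coming from $S=\{3,\ldots,n\}$. Commuting $b_2$ past $a_1$ produces the Koszul sign $(-1)^{|a||b|}$, so this second contribution is $(-1)^{n-1}(-1)^{|a||b|}\,a \otimes b \otimes a \otimes \cdots \otimes a$, and the two pieces assemble into the claimed right-hand side.

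The main obstacle is in the first identity, and this is precisely where I expect the statement to need repair. For $n=3$ the only interior position $j=2$ yields the monomial $-a\otimes 1\otimes a$, which already appears on the right, so the congruence holds; but for $n \ge 4$ the ``interior'' monomials corresponding to $j \in \{3,\ldots,n-1\}$ (for example $a\otimes a\otimes 1\otimes a$ when $n=4$) are not elements of the ideal generated by $a^2\otimes 1\otimes\cdots\otimes 1$ and $a\otimes\cdots\otimes a\otimes 1$, since producing them as multiples of these generators would require an inverse to $a$. The natural fix is to enlarge the ideal to contain all ``one-hole'' monomials $a\otimes\cdots\otimes 1\otimes\cdots\otimes a$ with the $1$ in any slot from $2$ to $n$, after which the expansion above yields the congruence verbatim; crucially, the second part is unaffected by this defect because the $b_1$ and $b_2$ factors already annihilate every problematic monomial on their own.
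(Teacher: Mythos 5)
Your diagnosis and your proof are essentially the paper's own: the second part is proved there by exactly this term-by-term expansion, with $b_1a_1$, $b_2a_2$ and $a_1^2$ lying in the ideal and killing everything except $(-1)^{n-1}b_1a_2a_3\cdots a_n$ and $(-1)^{n-1}b_2a_1a_3\cdots a_n$, and the first part is refuted there by the same interior monomial $a\otimes a\otimes 1\otimes a$ at $n=4$. The only substantive difference is your repair of the first part: the paper replaces the generator $a\otimes\cdots\otimes a\otimes 1$ by the single element $a\otimes a\otimes 1\otimes\cdots\otimes 1$, whose multiples already account for every one-hole monomial with the hole in a slot $\geq 3$, whereas your enlarged ideal also contains the slot-$2$ monomial $a\otimes 1\otimes a\otimes\cdots\otimes a$, which is one of the two terms on the right-hand side, so your corrected congruence is true but strictly weaker than the paper's. (Your subset expansion also suppresses the Koszul signs $(-1)^{|a|^2}$ incurred when reordering the $a_i$, but since every reordered term either survives unpermuted or lands in the ideal, this is harmless.)
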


\vspace{2mm}
Counter example:
\vspace{1mm}

Let us observe the equation (\ref{equation1}) for the case $n=4$. 
\begin{equation*}
(a_1-a_2)(a_1-a_3)(a_1-a_4) \equiv (a_1 -a_2)(-a_1a_4-a_3a_1+a_3a_4)
\end{equation*}
as $a_1^2= a^2 \otimes 1 \otimes 1 \otimes 1$
\begin{equation*}
(a_1-a_2)(a_1-a_3)(a_1-a_4) \equiv a_1a_3a_4 + a_2a_1a_4 + a_2a_3a_1 -a_2a_3a_4
\end{equation*}
as $a_1 a_1 a_4=a^2\otimes 1 \otimes 1 \otimes a$ and $a_1a_3a_1=(-1)^{|a|^2}a^2 \otimes 1 \otimes a \otimes 1$ are multiples of $a^2 \otimes 1 \otimes 1 \otimes 1$. Moreover, $a_2a_3a_1 = a \otimes a \otimes a \otimes 1$ which is a generator of the ideal. Thus
\begin{equation*}
\begin{split}
(a_1-a_2)(a_1-a_3)(a_1-a_4) \equiv & a_1a_3a_4 + a_2a_1a_4 -a_2a_3a_4\\
\equiv &  a \otimes 1 \otimes a \otimes a + (-1)^{|a|^2} a \otimes a \otimes 1 \otimes a - 1 \otimes a \otimes a \otimes a\\
\equiv & (-1)^4(a \otimes 1 - 1 \otimes a) \otimes a \otimes a + (-1)^{|a|^2} a \otimes a \otimes 1 \otimes a
\end{split} 
\end{equation*}
In the above equation we have an extra term $(-1)^{|a|^2}a \otimes a \otimes 1 \otimes a$ which clearly doesn't lie in the ideal generated by $a^2 \otimes 1 \otimes 1 \otimes 1$ and $a \otimes a \otimes a \otimes 1$. Thus the statement of the first part of Lemma $3.3$ in \cite{grant} is not correct.

\vspace{5mm}
Oversight in the proof:
\vspace{2mm}

Let $I_k$ be the ideal of $H^*(X;\Bbbk)^{\otimes n}$ generated by $a^2 \otimes 1 \otimes \cdots \otimes 1$ and $a \otimes \cdots \otimes a \otimes 1 \otimes \cdots \otimes 1$, wherein the latter term we have $(k-1)$ occurrences of $a$. Firstly, we observe that the ideal $I_{k+1} \subset I_{k}$.

The incorrect step in the proof is the inducting step. By induction hypothesis we have 
\begin{equation*}
(a_1-a_2)\cdots(a_1-a_k) \equiv (-1)^k(a \otimes 1 -1 \otimes a) \otimes a \otimes \cdots \otimes  a \otimes 1 \otimes \cdots \otimes 1 \text{ mod } I_k,
\end{equation*}
where both the terms have $(k-1)$ occurrences of $a$. Using the above equation we have $((a_1-a_2)\cdots(a_1 -a_k))(a_1-a_{k+1})$
\begin{equation*}
\begin{split}
\equiv & (-1)^k((a \otimes 1 -1 \otimes a) \otimes a \otimes \cdots \otimes  a \otimes 1 \otimes \cdots \otimes 1) (a \otimes 1 \otimes \cdots \otimes 1 ) \\
- & (-1)^k((a \otimes 1 -1 \otimes a) \otimes a \otimes \cdots \otimes  a \otimes 1 \otimes \cdots \otimes 1)(1 \otimes \cdots \otimes 1 \otimes a \otimes 1 \otimes \cdots \otimes 1)
\end{split}
\end{equation*}
mod $I_k$. When we modulo this expression with the ideal $I_{k+1}$ the first part of the expression $((a \otimes 1 -1 \otimes a) \otimes a \otimes \cdots \otimes  a \otimes 1 \otimes \cdots \otimes 1) (a \otimes 1 \otimes \cdots \otimes 1)$ vanishes but we may get some extra terms since $I_{k+1} \subset I_k$. Thus
\begin{equation*}
(a_1-a_2)\cdots(a_1-a_{k+1}) \not\equiv (-1)^{k+1}(a \otimes 1 -1 \otimes a) \otimes a \otimes \cdots \otimes  a \otimes 1 \otimes \cdots \otimes 1 \text{ mod } I_{k+1}.
\end{equation*} 

\section{Reformulation of the Lemma}

Now we are going to give an alternate proof of the second part of the lemma and then move on to give a modification of the first part of the lemma. 

\begin{lemma}
Suppose we have $a,b \in H^*(X;\Bbbk)$. For $n \geq 2$ we have 
\begin{equation*}
(b_1-b_2)(a_1-a_2)(a_1-a_3) \cdots (a_1-a_n) \equiv (-1)^{n+1}(b\otimes a + (-1)^{|a||b|}a \otimes b)\otimes a \otimes \cdots \otimes a
\end{equation*}
modulo terms in the ideal of $H^*(X;\Bbbk)^{\otimes n}$ generated by the elements $a^2 \otimes 1 \otimes \cdots \otimes 1$, $ba \otimes 1 \otimes \cdots \otimes 1$, and $1 \otimes ba \otimes 1 \otimes \cdots \otimes 1$.
\end{lemma}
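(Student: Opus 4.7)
My plan is to expand $(b_1-b_2)(a_1-a_2)(a_1-a_3)\cdots(a_1-a_n)$ directly and kill almost every monomial using three ideal-membership facts, bypassing the flawed inductive argument discussed above. Each monomial in the expansion comes from choosing either $b_1$ or $-b_2$ from the first factor and either $a_1$ or $-a_j$ from the factor $(a_1-a_j)$ for $j=2,\ldots,n$. Writing $J$ for the ideal of the statement, the three crucial facts are that $a_1^2=a^2\otimes 1\otimes\cdots\otimes 1$, $b_1a_1=ba\otimes 1\otimes\cdots\otimes 1$, and $b_2a_2=1\otimes ba\otimes 1\otimes\cdots\otimes 1$ all lie in $J$; indeed these are exactly its three stated generators.

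For the $b_1$-branch, $b_1a_1\in J$ forces every monomial that selects $a_1$ at least once to vanish modulo $J$, so the only survivor is
\[
b_1\cdot\prod_{j=2}^{n}(-a_j)=(-1)^{n-1}(b\otimes a\otimes a\otimes\cdots\otimes a).
\]
For the $-b_2$-branch, let $S\subseteq\{2,\ldots,n\}$ be the set of factors from which $a_1$ is selected. If $|S|\geq 2$ the monomial contains $a_1^2\in J$, while if $2\notin S$ it contains $b_2a_2\in J$ (since then $-a_2$ is selected from $(a_1-a_2)$). Hence the only case that survives is $S=\{2\}$, giving
\[
(-b_2)\cdot a_1\cdot\prod_{j=3}^{n}(-a_j)=(-1)^{n-1}b_2a_1a_3\cdots a_n.
\]
Using $b_2a_1=(-1)^{|a||b|}a_1b_2$, together with the fact that $a_1,b_2,a_3,\ldots,a_n$ sit at pairwise distinct tensor positions, this equals $(-1)^{n-1}(-1)^{|a||b|}(a\otimes b\otimes a\otimes\cdots\otimes a)$.

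Summing the two surviving contributions and noting $(-1)^{n-1}=(-1)^{n+1}$ then shows that the original product is congruent modulo $J$ to
\[
(-1)^{n+1}\bigl(b\otimes a+(-1)^{|a||b|}a\otimes b\bigr)\otimes a\otimes\cdots\otimes a,
\]
as required. No real obstacle appears: the three membership facts collapse the expansion to just two monomials, and the only sign subtlety is the single transposition of $b$ past $a$ in the surviving $-b_2$-branch term, which produces exactly the factor $(-1)^{|a||b|}$ that realises the symmetrisation $b\otimes a+(-1)^{|a||b|}a\otimes b$.
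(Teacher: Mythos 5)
Your proof is correct and follows essentially the same route as the paper: expand the product into monomials and observe that $a_1^2$, $b_1a_1$, and $b_2a_2$ are precisely the three generators of the ideal, so that only the two monomials $b_1a_2\cdots a_n$ and $b_2a_1a_3\cdots a_n$ survive. Your case analysis via the set $S$ and the explicit sign bookkeeping for $b_2a_1=(-1)^{|a||b|}a_1b_2$ are in fact slightly more detailed than the paper's own argument.
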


\begin{proof}
A term in the expression $(b_1-b_2)(a_1-a_2)(a_1-a_3) \cdots (a_1-a_n)$ will be of the form 
\begin{equation}
\label{expression}
(-1)^{\epsilon} \text{ } b_j a_{i_1} \cdots a_{i_{n-1}}
\end{equation}
where $j \in \{1,2\}$, $i_k \in \{1, k+1\}$ for $k\in \{1, 
\ldots ,n-1\}$ and $\epsilon \in \mathbb{Z}$.

If $a_1$ occurs twice in the expression (\ref{expression}), then the term itself is a multiple of $a^2 \otimes 1 \otimes \cdots \otimes 1$. Thus we can assume $a_1$ occurs atmost once.

Let us assume $b_i=b_1$. If $a_1$ comes in the expression (\ref{expression}), then the term itself is a multiple of $ba \otimes 1 \otimes \cdots \otimes 1$. Thus we can assume $a_1$ doesn't occur in the expression and hence the only expression possible is $(-1)^{n-1}b_1a_2a_3\cdots a_n$. Similarly, if we assume $b_i =b_2$, then the only expression possible is $(-1)^{n-1}b_2a_1a_3\cdots a_n$ since $a_1$ can occur atmost once.

Therefore we get 
\begin{equation*}
(b_1-b_2)(a_1-a_2)(a_1-a_3) \cdots (a_1-a_n) \equiv (-1)^{n-1}(b_1a_2a_3\cdots a_n+b_2a_1a_3\cdots a_n)
\end{equation*}
modulo terms in the ideal of $H^*(X;\Bbbk)^{\otimes n}$ generated by the elements $a^2 \otimes 1 \otimes \cdots \otimes 1$, $ba \otimes 1 \otimes \cdots \otimes 1$, and $1 \otimes ba \otimes 1 \otimes \cdots \otimes 1$.
\end{proof}

\begin{lemma}
Suppose we have $a \in H^*(X;\Bbbk)$. For $n \geq 2$ we have 
\begin{equation*}
(a_1-a_2)(a_1-a_3) \cdots (a_1 -a_n) \equiv (-1)^n(a \otimes 1-1 \otimes a) \otimes a \otimes \cdots \otimes a
\end{equation*}
modulo terms in the ideal of $H^*(X;\Bbbk)^{\otimes n}$ generated by the elements $a^2 \otimes 1 \otimes \cdots \otimes 1$ and $a \otimes a \otimes 1 \otimes \cdots \otimes 1$.
\end{lemma}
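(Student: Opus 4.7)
My plan is to mimic the term-by-term expansion strategy used in the proof of the previous lemma. I fully expand $(a_1-a_2)(a_1-a_3)\cdots(a_1-a_n)$ by choosing, for each factor $(a_1-a_{k+1})$ with $k \in \{1,\ldots,n-1\}$, either $a_1$ or $-a_{k+1}$. Each resulting monomial is indexed by the subset $S \subseteq \{1,\ldots,n-1\}$ of factors from which $a_1$ was selected, and carries an overall sign $(-1)^{n-1-|S|}$ together with an ordered product $a_2 a_3 \cdots a_k\, a_1\, a_{k+2} \cdots a_n$ on the non-$a_1$ slots.

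Next, I classify these monomials by $|S|$. If $|S|\geq 2$, the monomial contains $a_1\cdot a_1 = a^2\otimes 1\otimes\cdots\otimes 1$ as a subproduct and hence lies in the first piece of the ideal. If $S=\{k\}$ with $k\geq 2$, graded commutativity lets me move $a_1$ leftward past $a_2,\ldots,a_k$ at the cost of a factor $(-1)^{(k-1)|a|^2}$, so the monomial becomes a scalar multiple of the elementary tensor with $a$ in slots $1,2,\ldots,k,k+2,\ldots,n$; since $k\geq 2$, its first two slots both contain $a$, so it is a multiple of $a\otimes a\otimes 1\otimes\cdots\otimes 1$ and lies in the second piece of the ideal. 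The only terms not absorbed into the ideal are $S=\emptyset$ and $S=\{1\}$, which contribute $(-1)^{n-1}\cdot 1\otimes a\otimes\cdots\otimes a$ and $(-1)^{n-2}\cdot a\otimes 1\otimes a\otimes\cdots\otimes a$ respectively; their sum simplifies to $(-1)^{n-2}(a\otimes 1-1\otimes a)\otimes a\otimes\cdots\otimes a=(-1)^n(a\otimes 1-1\otimes a)\otimes a\otimes\cdots\otimes a$, as required.

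The only subtlety I foresee is the sign bookkeeping from graded commutativity, which for odd $|a|$ introduces extra factors of $-1$ when rearranging the $a_i$. Crucially, however, the monomials in the $S=\{k\}$, $k\geq 2$ case are absorbed into the ideal regardless of their sign, so those signs do not need to be tracked at all; only the two surviving terms, which require no rearrangement, contribute to the final sign computation, and that computation is elementary. This is precisely the reason the reformulated ideal, generated by $a\otimes a\otimes 1\otimes\cdots\otimes 1$ in place of the original $a\otimes\cdots\otimes a\otimes 1$, sidesteps the inductive failure exhibited in \cite{grant}: the second generator now kills the problematic middle terms directly, without any induction on $n$.
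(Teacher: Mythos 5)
Your proof is correct and follows essentially the same route as the paper: expand the product term by term, absorb every monomial containing $a_1$ twice into the ideal generated by $a^2\otimes 1\otimes\cdots\otimes 1$, absorb every monomial containing $a_1$ exactly once but not from the first factor into the ideal generated by $a\otimes a\otimes 1\otimes\cdots\otimes 1$, and sum the two surviving terms. Your observation that the graded-commutativity signs never need to be tracked, because the only terms requiring rearrangement are absorbed into the ideal anyway, is exactly the point that makes the reformulated statement work where the original induction failed.
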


\begin{proof}
A term in the expression $(a_1-a_2)(a_1-a_3) \cdots (a_1-a_n)$ will be of the form 
\begin{equation}
\label{expression2}
(-1)^{\epsilon} \text{ } a_{i_1} \cdots a_{i_{n-1}}
\end{equation}
where $i_k \in \{1, k+1\}$ for $k\in \{1, 
\ldots ,n-1\}$ and $\epsilon \in \mathbb{Z}$.

If $a_1$ occurs twice in the expression (\ref{expression2}), then the term itself is a multiple of $a^2 \otimes 1 \otimes \cdots \otimes 1$. Thus we can assume $a_1$ occurs atmost once.

Let us assume $a_{i_1}=a_2$. If $a_1$ comes in the expression (\ref{expression2}), then the term itself is a multiple of $a \otimes a \otimes 1 \otimes \cdots \otimes 1$. Thus we can assume $a_1$ doesn't occur in the expression and hence the only expression possible is $(-1)^{n-1}a_2a_3\cdots a_n$. Similarly, if we assume $a_{i_1}=a_1$, then the only possible expression is $(-1)^{n-2}a_1a_3 \cdots a_n$ since $a_1$ can occur atmost once. 

Therefore we get 
\begin{equation*}
(a_1-a_2)(a_1-a_3) \cdots (a_1-a_n) \equiv (-1)^{n}(a_1a_3\cdots a_n-a_2a_3\cdots a_n)
\end{equation*}
modulo terms in the ideal of $H^*(X;\Bbbk)^{\otimes n}$ generated by the elements $a^2 \otimes 1 \otimes \cdots \otimes 1$ and $a \otimes a \otimes 1 \otimes \cdots \otimes 1$.
\end{proof}

\paragraph{\textbf{Acknowledgement.} I would like to thank my advisor Dr. Mahender Singh of the Mathematics department at the Indian Institute of Science Education and Research (IISER) Mohali for his valuable guidance and introducing me to the research area of topological complexity. I would like to thank M. Grant, G. Lupton, and J. Oprea for their comments and positive feedback concerning this note. I would also like to thank IISER Mohali and KVPY for providing me fellowship during my BS-MS program.}

\end{document}